\newcommand\R{{\mathbb{R}}}
\newtheorem{theorem}{Theorem}[section]
\newtheorem{proposition}{Proposition}[section]
\newtheorem{lemma}[theorem]{Lemma}
\theoremstyle{definition}
\newtheorem{definition}[theorem]{Definition}
\theoremstyle{remark}
\newtheorem{remark}[theorem]{Remark}
\numberwithin{equation}{section}
\def\R{\mathbb{R}}
\begin{document}
\title[The cubic heat equation]
{slowly oscillating solution of the cubic heat equation}
\author{ Fernando Cortez}
\address{Universit\'e de Lyon, Universit\'e Lyon 1,
CNRS UMR 5208 Institut Camille Jordan,
43 bd. du 11 novembre,
Villeurbanne Cedex F-69622, France.}
\email{cortez@math.univ-lyon1.fr}
\urladdr{http://math.univ-lyon1.fr/$\sim$cortez}
\begin{abstract}
In this paper, we are considering the Cauchy problem of the nonlinear heat equation  $u_t -\Delta u= u^{3 },\ u(0,x)=u_0$. After extending Y. Meyer's result establishing the  existence of global  solutions, under a smallness condition of  the initial data in the homogeneous Besov spaces $\dot{B}_{p}^{-\sigma, \infty}(\mathbb{R}^{3})$, where $3<p<9$ and $\sigma=1-3/p$, we prove that  initial data $u_0\in \mathcal{S}(\mathbb{R}^{3})$, arbitrarily small in ${\dot B^{-2/3,\infty}_{9}}(\mathbb{R}^{3})$, can produce solutions that explode in finite time. In addition, the blowup may occur after an arbitrarily short time.
\end{abstract}
\maketitle
\section{Introduction}
 A well-studied evolution equation is $\partial_t u= \Delta\phi(u) +f(u)$, for various choices of $\phi$ and $f$ (see \cite{Ball, Frie, Frie1, Gala84, Gala, Henr, Weis}). 
 An extensive bibliography  exists for the particular  case, when $\phi(u)=u$ and $f(u)= \left | u \right |^{\alpha} u$, where $\alpha>0$. Then, we have  the following Cauchy problem:
\begin{equation}
\label{Ter}
\left \{ \begin{matrix} \partial_t u  = \Delta u  +  \left | u \right |^{\alpha} u &   \ x\in \mathbb{R}^{n}  & t\in[0,T] 
\\ 
u(0,x) = u_0(x), \end{matrix}\right. 
\end{equation}
where $0<T\leq\infty$, $\alpha>0$ and $u:\mathbb{R}^{+}\times \mathbb{R}^{n} \longrightarrow \mathbb{R}$ a real function.
\\
The Duhamel formulation of \eqref{Ter} reads
\begin{equation}
 \label{c3}
   u(t)= e^{t \Delta} u_0(x) +  \displaystyle\int_{0}^{t} e^{(t-\tau)\Delta}  \left | u \right |^{\alpha} u(\tau) \, d\tau, 
  \end{equation}
  where, $e^{t\Delta}$ ($t \geq 0$) denotes the heat semigroup.  We have $e^{t\Delta} f = G_t *f$, where 
\[
G_t(x)= \frac{1}{\sqrt{4\pi t}} e^{-\frac{\left | x \right |^{2}}{4t}}. 
\]
By standard results, Cauchy problem \eqref{Ter} is well-posed in many Banach spaces. In particular, thanks to the work of F. Weissler, H. Brezis and T. Cazenave \cite{Weis1,Weis,BreCaz}, we know the following statements.
\begin{itemize}
\item  When $p > \frac{n\alpha}{2}$, $p\geq\alpha+1$, there exists a constant $T=T(u_0)$ and a unique  solution $u(t)\in C([0,T],L^{p}(\mathbb{R}^{n}))$. Also $u(t) \in L^{\infty}_{loc}(]0,T[,L^{\infty})$. 

\item  When $p = \frac{n\alpha}{2}=p_0$, $p\geq\alpha+1$, there exists a constant $T=T(u_0)$ and a unique  solution $u(t)\in C([0,T],L^{p_0}(\mathbb{R}^{n})) \cap L^{\infty}_{loc}(]0,T[,L^{\infty})$.

\item  When $\alpha + 1 < p < \frac{n\alpha}{2}$, there is no general theory of existence. Besides, A. Haraux and F. Weissler \cite{Haraux} showed that there is a solution belonging to the space $C([0,T],L^{p}(\mathbb{R}^{n})) \cap L^{\infty}_{loc}(]0,T[,L^{\infty})$, positive, arising from the initial data $0$, thus there is no uniqueness.
\end{itemize}
We will be interested in the issues of the blowup in finite time and of the global existence of the solutions.
The first works related to these kinds of questions are due to Hiroshi Fujita in 1966. Fujita has shown that for the positive solutions of \eqref{Ter}, if the initial data $u_0$ is of class $C^{2}(\mathbb{R}^{n})$ with its derivatives of order $0,\ 1$ and $2$ bounded on $\mathbb{R}^{n}$, then we have the following necessary condition for that $u$ to be unique  in
$C^{0}(\mathbb{R}^{n}\times [0,T)):$
\begin{eqnarray*}
\exists  \ M>0,  \ \exists  \ 0<\beta<2: \forall  x\in \mathbb{R}^{n} \ \  |u_0(x)| \leq M \ e^{|x|^{\beta}}.
\end{eqnarray*}
This means that $u_0$ should not grow too fast (see \cite{Fuji,Fuji1}).
\\
In regards to  the question of the existence of regular global solutions under small initial data assumptions, Fujita concluded that there are two types of situations:
if  $\alpha < \frac{2}{n}$, then no nontrivial positive solution of this problem which can  be global (Fujita phenomenon), while for $\alpha > \frac{2}{n}$, there are global non-trivial solutions in positive small initial data assumptions.
Years later, K. Hayakawa \cite{Hay} and F. Weissler \cite{Weis1,Weis} completed the study Fujita demonstrating that the  Fujita critical exponent $\alpha = \frac{2}{n}$ verifies The Fujita phenomenon. 
On the other hand, in the case of a homogeneous Dirichlet condition in an exterior smooth domain 
$\Omega$, Bandle et Levine studied the classical positive solutions which satisfy the following condition on the order of growth:
\begin{eqnarray*}
\forall k>0, |u(x,t)| \ e^{-k |x|} \rightarrow 0 \quad \mbox{and}  \quad |\nabla u(x,t)| \  e^{-k |x|} \rightarrow 0 \quad \mbox{when}  \ \ |x| \rightarrow 0.
 \end{eqnarray*} 
Equivalently to the previous problem, Bandle and Levine showed similar results to those of Fujita with  Dirichlet boundary conditions for the problem \eqref{Ter} (see \cite{Ban,Ban1}).
It was not long before the Fujita critical case was resolved by Ryuichi Suzuki \cite{Suz}. He proved
that the Fujita critical exponent $\alpha = \frac{2}{n}$ verifies The Fujita phenomenon, when $n\geq 3$.
\\ H. Levine and Q. Zhang addressed the same problems in the case of  Neumann boundary conditions (see \cite{Levi}). They considered an initial condition $u_0$in $C^{2}(\Omega)$  and weak solutions in the sense of distributions with the test space $C^{2}(\Omega)$ which are not subject to any restriction on the growth order.
They  showed similar results to those of Fujita and  that  for the Fujita critical exponent $\alpha = \frac{2}{n}$, the solution verifies The Fujita phenomenon. Similar results with Robin boundary conditions has been shown by Rault in \cite{Rau1}.
\\ To motivate our results, we introduce the concept of a scale-invariant space. Let $\lambda>0$, then we define 
\begin{eqnarray}
\label{scaling}
u_\lambda(t,x)= \lambda^{\frac{2}{\alpha}} u(\lambda^{2}t,\lambda x)  \qquad \mbox{and} \qquad u_{0,{\lambda}}(x)= \lambda^{\frac{2}{\alpha}} u_0(\lambda x).
\end{eqnarray}
For every solution $u(t,x)$ of \eqref{Ter}, $u_\lambda(t,x)$ is also a solution of  \eqref{Ter} for which the initial condition is $u_{0,{\lambda}}(x)$. In this case we say that a  Banach space $E$ is \textit{scale-invariant space}, if
\begin{eqnarray}
\label{invsca}
\left \| u(t,\cdot) \right \|_{E}= \left \| u_\lambda(t,\cdot) \right \|_{E}. 
\end{eqnarray}  
The spaces which are invariant under such a scaling are called invariant spaces for this class of non-linear heat equation. It is known that the \textit{scale-invariant space} plays an essential role in questions like: well-posedness, global existence or blow-up of the solution. The critical Lebesgue is $L^{p_0}(\mathbb{R}^{n})$, with $p_0=\frac{n\alpha}{2}$. Notice that $p_0\geq 1$ if and only if  $\alpha$ is larger or equal to the Fujita critical exponent.
\\The purpose of the present paper is to study the borderline cases of explosion and global existence for solutions of a
 particular case of \eqref{Ter}, in a scale-invariant Banach space.
Actually, we consider the cubic heat  equation  
\begin{eqnarray}
\label{class}
\left \{ \begin{matrix} \partial_t u  = \Delta u  + u^{3} &   \ x\in \mathbb{R}^{3}  & t\in[0,T] 
\\ 
u(0,x) = u_0(x), \end{matrix}\right. 
\end{eqnarray} 
where $0< T \leq \infty$, where $u=u(x,t)$ is a real value function of  $(x,t), x\in \mathbb{R}^{3}$ and $t\geq 0$. 
One rewrites Equation \eqref{class} in the equivalent   Duhamel formulation
\begin{eqnarray}
\label{duhamel}
u(t,x)=u(t) = e^{t \Delta} u_0(x) +  \displaystyle\int_{0}^{t} e^{(t-\tau)\Delta}  u^{3}(\tau,x) \, d\tau.	
\end{eqnarray}
The following proposition shows the equivalence between differential \eqref{class} and integral formulation \eqref{duhamel}.
\begin{proposition}
Let $u\in L^{3}([0,T],L_{loc}^{3}(\mathbb{R}^{3}))$. Then the following statements are equivalent
\begin{enumerate}
\item $u$ satisfies $\partial_t u = \Delta u + u^{3}$ in the sense of distributions;
\item  there exists $u_0 \in \mathcal{S}'$ such that $u(t) = e^{t\Delta}u_0 + \int_{0}^{t} e^{(t-\tau)\Delta} u^{3}(\tau) \,d\tau$.
\end{enumerate}
\end{proposition}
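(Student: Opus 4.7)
I would prove both implications separately. The direction $(2)\Rightarrow(1)$ is a routine distributional computation: the first term $e^{t\Delta}u_0$ is annihilated by $\partial_t-\Delta$ on $(0,T)\times\mathbb{R}^{3}$ by standard semigroup theory, while for the Duhamel integral $w(t):=\int_0^t e^{(t-\tau)\Delta}u^3(\tau)\,d\tau$ one pairs against a test function, uses Fubini to exchange the time integral with the space--time test pairing, differentiates in $t$, and notes that the endpoint contribution $\lim_{\tau\to t^-}e^{(t-\tau)\Delta}u^3(\tau)=u^3(t)$ produces exactly the source, while the interior part $\partial_t e^{(t-\tau)\Delta}u^3(\tau)=\Delta e^{(t-\tau)\Delta}u^3(\tau)$ can be integrated by parts onto the test function. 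Adding the two contributions yields $\partial_t u=\Delta u+u^3$ in $\mathcal{D}'$.

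\textbf{The harder direction $(1)\Rightarrow (2)$.} By Hölder's inequality, the hypothesis $u\in L^{3}([0,T],L^{3}_{loc}(\mathbb{R}^{3}))$ gives $u^3\in L^{1}([0,T],L^{1}_{loc}(\mathbb{R}^{3}))$, so after a mild tempered-growth reduction the Duhamel term $w(t,x)$ defined above is a well-defined element of $\mathcal{D}'((0,T)\times\mathbb{R}^{3})$, and by the first direction it satisfies $(\partial_t-\Delta)w=u^3$ with $w(t)\to 0$ in $\mathcal{S}'$ as $t\to 0^+$. Setting $v:=u-w$, the difference then satisfies the homogeneous heat equation $(\partial_t-\Delta)v=0$ in distributions on $(0,T)\times\mathbb{R}^{3}$.

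\textbf{Recovering the initial datum.} The heat operator is hypoelliptic, so $v\in C^{\infty}((0,T)\times\mathbb{R}^{3})$. Convolving $v$ with a smooth space--time approximation of the identity supported at strictly positive times (so that the convolution remains caloric on a shrinking subinterval), applying the standard uniqueness theorem for the Cauchy problem with tempered data, and passing to the limit yields the semigroup identity $v(t)=e^{(t-s)\Delta}v(s)$ for all $0<s<t<T$. It then remains to show that $v(s)$ converges in $\mathcal{S}'(\mathbb{R}^{3})$ to some $u_0$ as $s\to 0^{+}$; granting this, one passes to the limit in the semigroup identity to obtain $v(t)=e^{t\Delta}u_0$, and substituting back into $u=v+w$ gives the Duhamel formula~(2).

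\textbf{Main obstacle.} The delicate point is the existence of the initial trace $u_0\in\mathcal{S}'$: the hypothesis on $u$ is only local in $x$, whereas to apply $e^{t\Delta}$ and remain in $\mathcal{S}'$ one needs tempered growth at infinity. A clean way around this is to extend $v$ by zero for $t\leq 0$, compute $(\partial_t-\Delta)\tilde v$ on $(-\infty,T)\times\mathbb{R}^{3}$, and identify the part of this distribution supported at $\{t=0\}$ as $u_0\otimes\delta_{t=0}$; this defines $u_0$ intrinsically as a distribution in $x$, and its promotion to $\mathcal{S}'$ then follows from a uniform-in-$s$ tempered estimate on $v(s)$, combining the local $L^{3}$ integrability of $u$, the $L^{1}_{loc}$ bound on $u^3$, and the Gaussian decay of the heat kernel.
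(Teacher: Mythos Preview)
The paper states this proposition without proof: it appears immediately before Remark~1.2 and is followed directly by that remark, with no argument supplied. So there is no proof in the paper to compare against; the result is quoted as a standard equivalence (analogous versions for Navier--Stokes appear, e.g., in Lemari\'e-Rieusset's work, cited in the paper).

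Your outline is the standard one and is correct in structure. The direction $(2)\Rightarrow(1)$ is indeed a routine distributional computation. For $(1)\Rightarrow(2)$, subtracting off the Duhamel term, using hypoellipticity of $\partial_t-\Delta$ to get a smooth caloric remainder $v$, deriving the semigroup identity $v(t)=e^{(t-s)\Delta}v(s)$, and then passing to the limit $s\to 0^+$ to recover $u_0$ is exactly the right strategy.

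You also correctly isolate the one genuine subtlety: with only $L^3_{loc}$ in space, the Duhamel integral and the action of $e^{t\Delta}$ on a candidate $u_0$ are not a priori well-defined, since Gaussian convolution requires tempered growth. Your phrases ``after a mild tempered-growth reduction'' and ``its promotion to $\mathcal{S}'$ then follows from a uniform-in-$s$ tempered estimate'' are the places where actual work remains; as written they are assertions, not arguments, and the local $L^3$ bound on $u$ together with the $L^1_{loc}$ bound on $u^3$ does \emph{not} by itself yield a tempered bound on $v(s)$. In practice the proposition is used in the paper only for solutions already known to lie in $L^3(\mathbb{R}^3)$ (hence tempered), so the issue is moot there; but for the proposition exactly as stated, one should either add the hypothesis $u(t)\in\mathcal{S}'$ for a.e.\ $t$, or else interpret the conclusion with $u_0\in\mathcal{D}'$ and the heat semigroup acting locally. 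Modulo making this temperedness assumption explicit, your sketch is complete.
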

\begin{remark}
The last proposition shows that if the initial data $u_0\in L^{3}_{loc}(\mathbb{R}^{3})$, then a classical solution of \eqref{class} is equivalent in the sense of distributions to a mild solution of \eqref{class} with  the initial data $u_0$. In the following, we always talk about the existence of  a mild solution of  \eqref{class}.
\end{remark}
In the case of equation \eqref{class} the only Lebesgue space invariant under this scaling \eqref{scaling} is $L^{3}(\mathbb{R}^{3})$. Other examples of invariant spaces  for \eqref{class} that will play an important  role later on are:
\begin{eqnarray}
\label{inega}
\dot{H}^{\frac{1}{2}}  \hookrightarrow L^{3} \hookrightarrow \dot{B}^{-1+3/p,\infty}_{p}  \hookrightarrow \dot{B}^{-1,\infty}_{\infty}, \quad \mbox{for} \quad 3 < p< \infty.
\end{eqnarray}
Problem \eqref{class} shares some similarities  with 
the incompressible Navier-Stokes equation.
Recall that the Cauchy problem of the incompressible Navier-Stokes equation in $\mathbb{R}^{3} \times \mathbb{R}^{+}$ is
\begin{equation}
\label{N-S}
  \begin{cases}
   \partial_t u + u \cdot \nabla u- \Delta u  = - \nabla p \quad \quad   x\in\mathbb{R}^{3} \quad t>0,\\
   \mbox{div} \ u=0, \quad \quad \quad \quad \quad \quad \quad \quad \quad  x\in\mathbb{R}^{3},\\
     u(x,0)=u_0(x) \quad \quad \quad \quad \quad   \quad \quad  x\in\mathbb{R}^{3},  
  \end{cases}
  \end{equation}
where $u = u(t, x)$ is a vector with $3$ components representing the velocity of an incompressible fluid 
and $p (t, x)$ is a function representing the  pressure.
Similarly to the equation \eqref{class}, we can rewrite the system \eqref{N-S} in the following integral form
\[
u(t,x)=u(t) = e^{t \Delta} u_0 +  \displaystyle\int_{0}^{t} e^{(t-\tau)\Delta} \mathbb{P} \ \mbox{div} (u \otimes u) (s) \, ds,	
\tag{NS}
\]
where, $\mbox{div} \ u_0 =0$, $e^{t \Delta}$ is the heat semigroup, and $\mathbb{P}$ is the Leray-Hopf projection operator into divergence free vector, defined by
\begin{eqnarray*}
\mathbb{P} f =  f - \nabla \Delta^{-1} (\mbox{div} \ f).
\end{eqnarray*}
We can see that in the integral formula for the incompressible Navier-Stokes equation the term pressure $p(x,t)$ is gone. Indeed, the pressure can be recalculated from the velocity field $u(x,t)$ (see \cite{Way}).
\\ The equation (NS) has exactly the same scaling law that equation~\eqref{duhamel}. For both equations, it is possible  to establish the global existence of solutions in certain  homogeneous Besov spaces with small initial conditions(see \cite{Meyer}), and  the uniqueness of them in a suitable subspace of $C_t(L^3(\R^3))$,  where these solutions are built.  
A number of methods developed to this equation (as those described in the book \cite{Canno1}) can be transposed to the case of equation \eqref{duhamel}.
Of course, in other cases, the results for the Navier-Stokes equations do not easily fit into the equation of the cubic heat \eqref{class}.
\\  
This paper is organized as follows. In the next section we start by introducing the relevant notations and function spaces,  recalling a few basic results.  In the Section 3 will give the statements of our results. First an extension of a theorem by  Meyer on the existence of global solutions under small initial data assumptions. Next we state the main theorem \eqref{inflation}. In section 4 we give the proof of the Meyer theorem  and some comments. In the last section we prove main theorem and some comments.
\section{Preliminaries}
First, we set the precise mathematical framework for the study of the Cauchy problem for the equation of the nonlinear heat \eqref{class}.
I

In the following theorem we called some of the results of  \cite{BreCaz,Weis1,Weis}
\begin{theorem}{\textit{[Weissler-Brezis-Cazenave \cite{BreCaz,Weis1,Weis}]}}
\label{localexis}
Let $\ u_0\in L^{3}(\mathbb{R}^{3})$. There exists a constant $T=T(u_0)$ and a unique of \eqref{class} in  $C([0,T],L^{3}(\mathbb{R}^{3})) \cap L^{\infty}_{loc}((0,T], L^{\infty}(\mathbb{R}^{3}))$, such that
\begin{itemize}
\item [(i)] $u(t,x)$ is classical solution of the Cauchy problem \eqref{class} on (0,T], \\
\item [(ii)] $\underset{0<t<T}{\sup} t^{\frac{\sigma}{2}}  \left \| u(\cdot,t) \right \|_{L^{p}}  < +\infty$, \\
\item [(iii)]  $\underset{t\rightarrow 0}{\lim} \ � t^{\frac{\sigma}{2}}  \left \| u(\cdot,t) \right \|_{L^{p}}=0$, \\
\end{itemize}
where $3<p<9$ and $\sigma=1-\frac{3}{p}$.
\end{theorem}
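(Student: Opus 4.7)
The plan is to prove Theorem \ref{localexis} by a Banach fixed point argument applied to the Duhamel operator
\[
\Phi(u)(t)= e^{t\Delta}u_0 + \int_0^t e^{(t-\tau)\Delta} u^3(\tau)\dd\tau,
\]
in a time-weighted Kato space tailored to the scaling of \eqref{class}. Concretely, I would work with
\[
X_T=\Bigl\{u\in C((0,T],L^p(\R^3)) : \|u\|_{X_T}:=\sup_{0<t<T} t^{\sigma/2}\|u(t)\|_{L^p}<\infty,\ t^{\sigma/2}\|u(t)\|_{L^p}\to 0 \text{ as } t\to 0\Bigr\},
\]
with $3<p<9$ and $\sigma=1-3/p$. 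The exponent $\sigma/2$ is the unique one that makes the weighted norm scale-invariant under \eqref{scaling} with $\alpha=2$.

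The first step is to control the linear evolution. By the standard $L^q$--$L^p$ estimate for the heat semigroup, $\|e^{t\Delta}u_0\|_{L^p}\le C t^{-\sigma/2}\|u_0\|_{L^3}$, so $\|e^{\cdot\Delta}u_0\|_{X_T}\le C\|u_0\|_{L^3}$ uniformly in $T$. For the vanishing at $0$, I would approximate $u_0$ in $L^3$ by Schwartz functions $\varphi$: for $\varphi$ smooth, $t^{\sigma/2}\|e^{t\Delta}\varphi\|_{L^p}\le t^{\sigma/2}\|\varphi\|_{L^p}\to 0$, and the $L^3\to L^p$ bound handles the approximation error, giving the small-at-origin property and, upon taking $T$ small, smallness of $\|e^{\cdot\Delta}u_0\|_{X_T}$ itself.

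The second step is the key nonlinear estimate for the trilinear Duhamel term. For $u,v,w\in X_T$, Hölder gives $\|uvw(\tau)\|_{L^{p/3}}\le \|u\|_{L^p}\|v\|_{L^p}\|w\|_{L^p}\le \tau^{-3\sigma/2}\|u\|_{X_T}\|v\|_{X_T}\|w\|_{X_T}$, and then the $L^{p/3}\to L^p$ heat estimate yields
\[
\|\Phi(u,v,w)(t)\|_{L^p}\le C\int_0^t (t-\tau)^{-3/p}\tau^{-3\sigma/2}\dd\tau\ \|u\|_{X_T}\|v\|_{X_T}\|w\|_{X_T}.
\]
The scaling $\tau=t s$ transforms this into $t^{-\sigma/2}$ times a Beta integral, and the Beta integral $\int_0^1(1-s)^{-3/p}s^{-3\sigma/2}\dds$ is finite precisely when $3/p<1$ and $3\sigma/2<1$, i.e.\ $3<p<9$---this is the reason for the stated range of $p$. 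Thus the trilinear map is bounded $X_T^3\to X_T$ with a constant independent of $T$, and a standard smallness argument on $\|e^{t\Delta}u_0\|_{X_T}$ (achieved by taking $T=T(u_0)$ small enough) together with a Picard iteration / contraction in a ball of $X_T$ produces a unique fixed point $u\in X_T$ satisfying properties (ii) and (iii).

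Finally, I would upgrade the regularity. Continuity $u\in C([0,T],L^3)$ follows because $e^{t\Delta}u_0\in C([0,T],L^3)$ and the Duhamel integral can be bounded in $L^3$ using $\|u^3(\tau)\|_{L^{p/3}}$ combined with $L^{p/3}\to L^3$ and integrability in $\tau$ in the same way. To get $u\in L^\infty_{loc}((0,T],L^\infty)$ and the classical-solution property (i), I would bootstrap: once $u\in X_T$ we can feed $u^3$ into the Duhamel integral with an $L^{q}\to L^\infty$ estimate on the heat kernel and gain integrability, iterating a finite number of times to reach $L^\infty$; classical differentiability on $(0,T]\times\R^3$ then follows from parabolic smoothing of the heat semigroup applied to the (smooth, at positive times) nonlinearity. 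The main obstacles I anticipate are technical rather than conceptual: verifying the endpoint behavior in (iii) with a clean density argument, and being careful that the smallness required in the Picard iteration is obtained from the vanishing of $t^{\sigma/2}\|e^{t\Delta}u_0\|_{L^p}$ at $t=0$ rather than from smallness of $\|u_0\|_{L^3}$ (the latter is not assumed).
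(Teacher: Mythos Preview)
The paper does not prove Theorem~\ref{localexis}: it is stated as a quotation of known results from \cite{BreCaz,Weis1,Weis} and used as a black box. So there is no ``paper's own proof'' to compare against. That said, your outline is correct and is precisely the standard Kato--Weissler argument from those references: the time-weighted space $X_T$, the $L^3\to L^p$ heat smoothing giving $t^{-\sigma/2}$, the trilinear Duhamel estimate closing via the Beta integral $\int_0^1(1-s)^{-3/p}s^{-3\sigma/2}\dd s$ (finite exactly for $3<p<9$), and smallness obtained from the vanishing $t^{\sigma/2}\|e^{t\Delta}u_0\|_{L^p}\to0$ rather than from $\|u_0\|_{L^3}$. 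This is also the same machinery the paper itself deploys in Section~\ref{proofT1} to prove Theorem~\ref{T1}, where the trilinear operator $\Gamma$ is estimated in the identical way and then Lemma~\ref{fixe} replaces the contraction mapping; the only difference there is that smallness comes from the Besov norm of $u_0$ rather than from shrinking $T$. Your bootstrap sketch for $L^\infty_{\mathrm{loc}}$ and classical regularity is also the standard route and presents no difficulty.
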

\begin{remark}
\label{L1}
If in addition, we consider $u_0\in L^{1}(\mathbb{R}^3) \cap L^{3}(\mathbb{R}^3)$, the unique  Weissler's solution $u(t,x)$ arising from $u_0$ verifies:  $u(t,\cdot)\in L^{1}(\mathbb{R}^3) \  \forall t \in [0,T]$. This observation readily follows from Weissler method and will be implicitly in some of our calculations, for example when we use  the Fourier transform of the solution. 
\end{remark}
Before stating our results, we define  the inhomogeneous and homogeneous  Besov spaces which  play an important role in our estimates. Recall the Littlewood-Paley decomposition. Let $\psi, \varphi \in \mathcal{S}(\mathbb{R}^{3})$ such that:
\begin{eqnarray*}
&& \mbox{supp} \  \widehat{\varphi} \subset \{ \left | \xi \right | \leq 5/6\} \qquad \mbox{and} \qquad \mbox{supp} \  \widehat{\psi} \subset \{ 3/5 \leq \left | \xi \right | \leq 5/3\}  \ \ \ \psi_j=2^{nj} \psi(2^{j}x), \ j\in \mathbb{Z}  \\
&&  1=\widehat{\varphi}(\xi) +  \displaystyle\sum_{j=0}^{\infty}  \widehat{\psi}_j (\xi) \ \ \ (\xi \in \mathbb{R}^{n})  \\
&&  1=\displaystyle\sum_{j=-\infty}^{\infty}  \widehat{\psi}_j (\xi) \ \ \ (\xi \in \mathbb{R}^{n})
\end{eqnarray*}
where $\widehat{f}$ denotes the Fourier transform of $f$.
\begin{definition}
The inhomogeneous and homogeneous Besov spaces $B^{s,q}_{p}$ and $\dot{B}^{s.q}_{p}$ are defined as follows (at least for $s<0$, which will be our case):
\begin{eqnarray*}
 \dot{B}^{s.q}_{p} &=& \{ f\in\mathcal{S}'(\mathbb{R}^{3});  \left \| f \right \|_{\dot{B}^{s,q}_{p}} <  \infty  \}, \ \ \ \ \{ f\in\mathcal{S}'(\mathbb{R}^{3});  \left \| f \right \|_{\dot{B}^{s,q}_{p}} <  \infty  \}, \\
\left \| f \right \|_{B^{s,q}_{p}} &=& \left \| \varphi * f\ \right \|_{p} +\left(\displaystyle\sum_{j=0}^{\infty}  \left \| 2^{js} \psi_j * f  \right \|_{p}^{q}\right)^{1/q} \\
\left \| f \right \|_{\dot{B}^{s,q}_{p}} &=& \left(\displaystyle\sum_{j=-\infty}^{\infty}  \left \| 2^{js} \psi_j * f  \right \|_{p}^{q}\right)^{1/q}
\end{eqnarray*}
for $s\leq0 \ \ 1\leq p, q \leq \infty$.
\end{definition}
\section{Quick overview of the main results}
In this section, we will give the main result of the blow-up of the solution of \eqref{class}. But first, we will give an extension of Meyer theorem to the case of global solutions of the nonlinear heat equation with a smallness assumption on the initial condition.
\\
The general methodology that will be used throughout this section is to look for the solutions $u(t,x)$ belonging to the Banach space $\mathcal{X} = C_b([0,\infty); \mathcal{Z})$, where $\mathcal{Z}$ is a suitable functional Banach spaces.  
\\The norm of $u(\cdot,t)$ in $\mathcal{X} = C_b([0,\infty); \mathcal{Z})$ is denoted by $\left\| u \right \|_{\mathcal{X}}$ and defined as
\begin{eqnarray}
 \left \| u \right \|_{\mathcal{X}} = \underset{t>0}{\sup} \  \left \| u(\cdot,t) \right \|_{\mathcal{Z}}.
\end{eqnarray}
This norm will be called the natural norm. To prove our Theorem \eqref{T1}, we first assume the existence and uniqueness of a local solution from initial data $u_0$ and for this we need the Weissler Theorem. If $\mathcal{Z}$ is simply $L^{3}(\mathbb {R}^3)$, the standard fixed point argument is not valid in $\mathcal{X}$.
Weissler proposes to replace $\mathcal{X}$ by the Banach space $Y\subset \mathcal{X}$ consisting of all functions such that
\begin{equation}
\label{eq1} 
\left \{ \begin{matrix} u(\cdot,t)\in C([0,\infty);L^{3}(\mathbb{R}^{3})) 
\\ 
t^{\frac{\sigma}{2}} u(\cdot,t)\in C([0,\infty);L^{p}(\mathbb{R}^{3}))
\\   \lim _{t\to 0} {t^{\frac{\sigma}{2}}\left \| u \right \|_{p}}=0
\\   \lim _{t\to \infty} {t^{\frac{\sigma}{2}}\left \| u \right \|_{p}}=0, 
\end{matrix}\right. 
\end{equation}
where $3<p<9$ and $\sigma=1-\frac{3}{p}$.
Three distinct norms will be used. As above, natural norm is
\begin{eqnarray*}
\left \| u \right \|_{\mathcal{X}} = \underset{t>0}{\sup} \  \left \| u(\cdot,t) \right \|_{3}.
\end{eqnarray*}
The second norm is called the strong norm and is defined by
\begin{eqnarray*}
\left \| u \right \|_{*} =  \left \| u \right \|_{\mathcal{X}} + \ \underset{t>0}{\sup} \ t^{\frac{\sigma}{2}}  \left \| u(\cdot,t) \right \|_{L^{p}}.
\end{eqnarray*}
The third norm is the weak norm  that is defined by
\begin{eqnarray*}
\left \| u \right \|_{Y} = \underset{t>0}{\sup} \ t^{\frac{\sigma}{2}}  \left \| u(\cdot,t) \right \|_{L^{p}}.
\end{eqnarray*}
Then, we consider the linear space $\mathcal{H}$ of all solutions $u(x,t)$, of the linear heat equation 
$\frac{\partial u}{\partial t}= \Delta u$. Then the previous three norms are equivalent on $\mathcal{H}$.
\\ 
 The first problem is show to  the global existence of the solution $u(t,x)$ for \eqref{class}  under a condition of smallness on the initial data $u_0(x)$ in homogeneous Besov spaces $\dot{B}_{p}^{-\sigma, \infty}(\mathbb{R}^{3})$, where $3<p<9$ and $\sigma=1-3/p$.  Actually, our theorem is merely an extension of a theorem of Meyer \cite{Meyer} who did this for $p=6$, and it is based on a fixed-point theorem adapted for our case and the fact that the integral $\int_{0}^{t}(t-\tau)^{-\frac{3}{p}} \tau^{-\frac{3\sigma}{2}}$ is finite for $3<p<9$. We will prove in section \ref{proofT1} the following theorem.
\begin{theorem}
\label{T1}
Let  $3<p<9$ and  let $\left \| \cdot \right \|_{\dot{B}_{p}^{ -\sigma, \infty}}$ denote the norm in the homogeneous Besov space $\dot{B}_{p}^{-\sigma, \infty}(\mathbb{R}^{3})$, with $\sigma= 1 - \frac{3}{p}$.
There exists a positive number $\eta$ such that, if the initial condition $u_0(x)$ satisfies $u_0 (x) \in L^{3}(\mathbb{R}^{3})$  and $\left \| u_0(x) \right \|_{\dot{B}_{p}^{-\sigma, \infty}} \leq \eta $,  then  there exists  a global solution $u(x,t)\in C([0,\infty),L^{3}(\mathbb{R}^{3})) \cap Y=: \mathcal{W}$ to \eqref{class}, where $(Y. \left \| \cdot \right \|_{Y} )$ is the  Banach space such that
\[
\left \| u \right \|_{Y} = \ \underset{t>0}{\sup} \ t^{\frac{\sigma}{2}}  \left \| u(\cdot,t) \right \|_{L^{p}} < \infty
\]
\end{theorem}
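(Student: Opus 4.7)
The plan is to solve the Duhamel equation \eqref{duhamel} by a fixed-point argument in the weak norm space $(Y,\|\cdot\|_Y)$, and then upgrade the solution to the class $C([0,\infty);L^3)$ using the extra assumption $u_0\in L^3$. Write $B(u,v,w)(t)=\int_0^t e^{(t-\tau)\Delta}(uvw)(\tau)\,d\tau$ so that solving \eqref{duhamel} amounts to finding a fixed point of $u\mapsto e^{t\Delta}u_0+B(u,u,u)$.

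The first step is the linear estimate: by the characterization of homogeneous Besov norms via the heat semigroup, one has
\[
\sup_{t>0}t^{\sigma/2}\|e^{t\Delta}u_0\|_{L^p}\lesssim \|u_0\|_{\dot B_p^{-\sigma,\infty}},
\]
so the datum term is small in $Y$ whenever $\|u_0\|_{\dot B_p^{-\sigma,\infty}}\leq\eta$ is small.

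The second and main step is the trilinear estimate $\|B(u,v,w)\|_Y\leq C\|u\|_Y\|v\|_Y\|w\|_Y$. Combine H\"older $\|(uvw)(\tau)\|_{L^{p/3}}\leq \|u(\tau)\|_{L^p}\|v(\tau)\|_{L^p}\|w(\tau)\|_{L^p}\leq \tau^{-3\sigma/2}\|u\|_Y\|v\|_Y\|w\|_Y$ with the heat-semigroup $L^{p/3}\to L^p$ smoothing $\|e^{(t-\tau)\Delta}f\|_{L^p}\lesssim (t-\tau)^{-3/p}\|f\|_{L^{p/3}}$. This gives
\[
t^{\sigma/2}\|B(u,v,w)(t)\|_{L^p}\leq C\,t^{\sigma/2}\int_0^t (t-\tau)^{-3/p}\tau^{-3\sigma/2}\,d\tau\;\|u\|_Y\|v\|_Y\|w\|_Y.
\]
The change of variables $\tau=ts$ makes the $t$-factor disappear (since $\tfrac{\sigma}{2}+1-\tfrac{3}{p}-\tfrac{3\sigma}{2}=0$) and reduces the integral to the Beta function $\int_0^1(1-s)^{-3/p}s^{-3\sigma/2}\,ds$. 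This converges precisely when $3/p<1$ and $3\sigma/2<1$, i.e.\ $p>3$ and $p<9$: the constraint $3<p<9$ appears here, and this is the only delicate point of the proof. With this trilinear bound in hand, a standard Picard iteration / contraction argument in the closed ball $\{u\in Y:\|u\|_Y\leq 2\eta\}$ yields a unique small solution in $Y$ provided $\eta$ is chosen small enough.

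The third step is to show that this solution actually lies in $C([0,\infty);L^3(\R^3))$. The linear part $e^{t\Delta}u_0$ belongs to $C([0,\infty);L^3)$ because $u_0\in L^3$. For the nonlinear part, repeat the trilinear argument but end in $L^3$ instead of $L^p$: use $\|e^{(t-\tau)\Delta}f\|_{L^3}\lesssim (t-\tau)^{-\frac{9}{2p}+\frac{1}{2}}\|f\|_{L^{p/3}}$, giving
\[
\|B(u,u,u)(t)\|_{L^3}\leq C\int_0^t(t-\tau)^{-\frac{9}{2p}+\frac{1}{2}}\tau^{-3\sigma/2}d\tau\;\|u\|_Y^3,
\]
and after $\tau=ts$ the $t$-power is again zero and the resulting Beta integral converges exactly under $3<p<9$. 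Continuity in $t$ with values in $L^3$ follows from the dominated convergence applied to the integral representation together with strong continuity of $e^{t\Delta}$ on $L^3$; this is the usual routine check that the mild solution inherits the temporal regularity of the datum. The main obstacle throughout is the sharpness of the integrability window $3<p<9$, which appears twice and forces the use of the norm $\dot B_p^{-\sigma,\infty}$ with $\sigma=1-3/p$.
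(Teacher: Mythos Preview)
Your proof is correct and follows essentially the same route as the paper: heat-kernel characterization of $\dot B_p^{-\sigma,\infty}$ for the linear term, the trilinear bound $\|B(u,v,w)\|_Y\le C\|u\|_Y\|v\|_Y\|w\|_Y$ via H\"older and $L^{p/3}\!\to\!L^p$ heat smoothing, the Beta-integral computation that forces $3<p<9$, and a fixed-point argument (the paper states this as a trilinear Picard lemma due to Meyer). For the upgrade to $C([0,\infty);L^3)$ the paper simply cites Meyer, whereas you carry out the explicit $L^{p/3}\!\to\!L^3$ estimate; the two are equivalent.
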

Because of the continuous embedding $L^{3} \subset \dot{B}^{-\sigma, \infty}_{p}$ (see \eqref{inega}), small data in $L^{3}(\mathbb{R}^{3})$ give rise to a global solution. The interesting feature of Theorem \ref{T1}, however, is that  the
$\dot{B}^{-\sigma, \infty}_{p}$-norm can be small even when $L^{3}$-norm is large: this is typically the case of fast oscillating data, see \cite{Canno1, Meyer}.
\\
Is it possible to further relax the smallness condition  $\left \| u \right \|_{\dot{B}^{-\sigma,\infty}_{p}} \leq \eta$ for the global solvability of \eqref{class}? Our main result, Theorem \ref{inflation} below  provides a negative answer in this direction.
\begin{theorem}
\label{inflation}
Let  $\delta>0$. Then there exists $u_0\in \mathcal{S}(\mathbb{R}^{3})$ such that the unique Weissler's solution $u$ arising from $u_0$ and belonging to $C([0,T^{*}], L^{3}(\mathbb{R}^{3}))\cap L^{\infty}_{loc}(]0,T^{*}],L^{\infty}(\mathbb{R}^{3}))$ verifies $T^{*}< \delta$. In  addition, we can choose $u_0$ in the following way: for all $3\leq q \leq +\infty$, 
\begin{eqnarray*}
 \left \| u_{0} \right \|_{\dot{B}^{-\frac{2}{3},q}_{9}} \leq \delta.  
\end{eqnarray*}
\end{theorem}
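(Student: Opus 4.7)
The strategy is a Bourgain--Pavlovi\'c-style norm-inflation construction at the endpoint $p=9$ of Theorem~\ref{T1}, exploiting the fact that the bilinear integral $\int_0^t(t-\tau)^{-1/3}\tau^{-1}\,d\tau$ that drives the fixed-point scheme of Theorem~\ref{T1} diverges exactly when $p=9$. Concretely I would take
\[
u_0(x)=\kappa\sum_{j=1}^{N} c_j\cos(k_j\cdot x)\,\phi(x),
\]
with $\phi\in\mathcal{S}(\R^3)$ a fixed real-valued bump spectrally localized near the origin, positive coefficients $c_j$ and a small $\kappa>0$ to be chosen, and vectors $k_j\in\R^3$ placed in very widely separated dyadic shells (e.g.\ $|k_{j+1}|\ge 2^{10}|k_j|$) and arranged so that $\{k_j\}$ contains many \emph{resonant} triples $(i,j,\ell)$ with $k_i+k_j-k_\ell=0$ (obtained for instance by grouping the $k_j$ into disjoint coplanar triangles at well-separated scales). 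Spectral disjointness then controls $\|u_0\|_{\dot B^{-2/3,q}_9}$ uniformly in $N$ and in $q\in[3,\infty]$ by a quantity of order $\kappa$, so by taking $\kappa$ small the Besov constraint $\|u_0\|_{\dot B^{-2/3,q}_9}\le\delta$ is satisfied simultaneously for every $q\ge 3$.

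Next I would work with the decomposition $u=U_1+U_2+R$, where $U_1(t)=e^{t\Delta}u_0$ is the linear evolution and $U_2(t)=\int_0^t e^{(t-\tau)\Delta}U_1(\tau)^3\,d\tau$ is the second Picard iterate. Expanding each product $\cos(k_i\cdot x)\cos(k_j\cdot x)\cos(k_\ell\cdot x)$ into its eight shifted cosines, exactly the resonant triples produce the undamped low-frequency piece $\tfrac14\phi(x)^3$, while the non-resonant terms sit at high frequency and are heavily damped by the heat semigroup at a suitable test time $t_0<\delta$. Summing over the resonant triples, the fact that $\phi^3$ has a definite sign rules out cancellations and yields a coherent lower bound
\[
\|U_2(t_0)\|_{L^3}\gtrsim c(\phi)\,\kappa^3\,\Phi(N,\{k_j\},\{c_j\}),
\]
which can be driven without bound by tuning $N,c_j,k_j$ while keeping $t_0<\delta$ and $\|u_0\|_{\dot B^{-2/3,q}_9}<\delta$.

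Finally I would argue by contradiction: if $T^*\ge t_0$, then Theorem~\ref{localexis} gives $u(t_0)\in L^3$. Bounding the higher-order Picard remainder $R$ by a fixed-point iteration in a sub-critical Besov space $\dot B^{-\sigma,\infty}_p$ with $3<p<9$ strictly (where the bilinear estimate of Theorem~\ref{T1} is valid) produces a remainder bound $\|R(t_0)\|_{L^3}=o(\|U_2(t_0)\|_{L^3})$; meanwhile $\|U_1(t_0)\|_{L^3}$ is controlled directly via the linear heat flow. The decomposition $u=U_1+U_2+R$ then forces $\|u(t_0)\|_{L^3}$ to be arbitrarily large, contradicting $u(t_0)\in L^3$; hence $T^*<t_0<\delta$. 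The two principal obstacles are (i) arranging the frequencies $k_j$ so that enough sign-coherent resonant triples exist to force the required lower bound on $U_2$ (without destructive interference), and (ii) controlling the remainder $R$ at the critical \emph{endpoint} $p=9$ where the standard fixed-point scheme breaks down. The second point is the more delicate one and is handled by performing the remainder analysis in the strictly sub-critical family $\dot B^{-\sigma,\infty}_p$ with $p<9$ and transferring the outcome to $L^3$ via the embeddings in \eqref{inega} together with the smoothing action of $e^{t\Delta}$.
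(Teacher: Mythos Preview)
Your outline has two genuine gaps that prevent the argument from closing.

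\textbf{The remainder cannot be controlled as you propose.} You suggest bounding $R$ by running the fixed-point scheme in $\dot B^{-\sigma,\infty}_p$ with $3<p<9$. But by the embedding chain \eqref{inega}, $\dot B^{-\sigma,\infty}_p$ with $p<9$ is strictly smaller than $\dot B^{-2/3,\infty}_9$, so your data will be \emph{large} there (indeed, if it were small, Theorem~\ref{T1} would produce a global solution and blowup could not occur). Hence the contraction estimate that underlies Theorem~\ref{T1} is unavailable, and you have no mechanism to make $\|R(t_0)\|_{L^3}=o(\|U_2(t_0)\|_{L^3})$. This is not a technicality: the whole content of the theorem is that the data sit precisely at the borderline where every subcritical fixed point fails.

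\textbf{Norm inflation is not blowup.} Even if the decomposition $u=U_1+U_2+R$ were under control, the conclusion ``$\|u(t_0)\|_{L^3}$ is arbitrarily large, contradicting $u(t_0)\in L^3$'' is a non sequitur: for a fixed choice of $u_0$ the norm is some finite number, and Theorem~\ref{localexis} gives no a priori upper bound to violate. Bourgain--Pavlovi\'c style arguments establish ill-posedness (discontinuity of the solution map), not finite-time explosion; you need an additional mechanism that upgrades a large lower bound at one time to an actual $L^3$ blowup before time $\delta$.

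The paper's proof avoids both problems simultaneously by working entirely on the Fourier side with \emph{nonnegative} $\widehat u$. The data are chosen with $\widehat{u}_0\ge 0$, and Remark~\ref{R1} propagates this positivity; then every term in the Duhamel formula is nonnegative, so one can freely drop terms and obtain one-sided lower bounds without ever estimating a remainder. Concretely the frequencies are $(2^k-1)e_1$, so the single resonance $(2^{k+1}-1)-(2^k-1)-(2^k-1)=-1$ already forces $\widehat u(\delta/2,\cdot)\ge \tau_N\,\widehat w*\widehat w*\widehat w(\cdot+e_1)$ with $\tau_N\to\infty$. The second missing idea is the Montgomery--Smith bootstrap (Lemma~\ref{lemme1} and Lemma~\ref{Theoreme1}): once $\widehat u(\delta/2,\cdot)$ dominates $A\widehat v$ for a suitable bump $v$ with $A$ above an explicit threshold, iterating the positivity bound yields $\widehat u\ge A^{3^k}\alpha_k(t)\widehat{v^{3^k}}$, which blows up in $L^\infty_\xi$ (hence $u$ in $L^1_x$) before an additional time $\delta/2$. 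That iteration is what converts a large lower bound into genuine finite-time blowup, and it has no analogue in your Picard-remainder scheme.
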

In particular, it follows that an initial data $u_0\in \mathcal{S}(\mathbb{R}^{3})$ and 
and arbitrarily small in $\dot{B}^{-1,\infty}_{\infty}(\mathbb{R}^{3})$ can produce solutions that explode in finite time.
In the case of the incompressible Navier-Stokes equation, a related result were obtained by Bourgain-Pavlovic \cite{Bourg} and later by Yoneda \cite{Yone}.
Bourgain-Pavlovic proved that the incompressible Navier-Stokes equation is ill-posed in the Besov space $\dot{B}^{-1,\infty}_{\infty}$ showing an inflation phenomenon of the norm of the solution from an initial condition $u_0$. Yoneda has generalized this result to the case of Besov spaces $\dot{B}^{-1,\infty}_{q}$ with $q>2$.
\\
If we compare these results with our results for the equation \eqref{class}, we can see that our result  for \eqref{class} is stronger, as it shows that an arbitrarily small initial data $u_0$ can produce a blow-up in short time of the solution, while the results of Bourgain, Pavlovic and Yoneda only show that an arbitrarily small initial data $u_0$ can produce arbitrarily large solutions in short time.
\\
Our demonstration is inspired to that of  Mongomery-Smith \cite{Montg}, where he  built initial data such that there is no a reasonable solution to a toy model for the  Navier-Stokes equation in ${\dot{B}^{-1,\infty}_{\infty}}$. 
\section{Proof of Theorem \ref{T1}}
\label{proofT1}
For the proof of Theorem \ref{T1}, we use  a  fixed point argument:  
\begin{lemma}[see \cite{Meyer}]
\label{fixe}
Let $(\mathcal{W}, \left \| \cdot \right \|_{\mathcal{W}} )$  a Banach space and let
\begin{eqnarray*}
 B: \mathcal{W} \times \mathcal{W} \times \mathcal{W} \rightarrow \mathcal{W},  
\end{eqnarray*}
a application trilinear such that 
\begin{eqnarray*}
\left \| B(x,y,z) \right \|_{\mathcal{W}} \leq C_0 \left \| x \right \|_{\mathcal{W}} \left \| y \right \|_{\mathcal{W}} \left \| z \right \|_{\mathcal{W}}.
\label{3}
\end{eqnarray*}
If $\left \| x_0 \right \|_{\mathcal{W}} \leq \frac{2}{3\sqrt{3}} C_0^{-1/2}$, then the equation
\begin{eqnarray*}
x = x_0 + B(x,x,x) \ \ \ \ x\in \mathcal{W},
\label{4}
\end{eqnarray*} 
has a unique solution which satisfies  $\left \| x \right \| _{\mathcal{W}}\leq \frac{1}{\sqrt{3}} C_0^{-1/2}$ and this solution is the limit of the sequence $(x_n)_{n\in\mathbb{N}}$, defined by
\begin{eqnarray*}
x_{n+1}=x_0 + B(x_n, x_n, x_n), 
\label{5}
\end{eqnarray*} 
and therefore the function defined as 
\begin{eqnarray*}
\Psi(x_0)= \displaystyle\lim_{n \to{+}\infty}{x_n}, 
\end{eqnarray*}
is analytic in the ball  $ \left \| x \right \|_{\mathcal{W}} \leq \frac{2}{3\sqrt{3}} C_0^{-1/2}$.
\label{lema1}
\end{lemma}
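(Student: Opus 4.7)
The approach is a standard Picard iteration in a carefully chosen closed ball. Set $R := \tfrac{1}{\sqrt{3}}\,C_0^{-1/2}$, $\Phi(y) := x_0 + B(y,y,y)$, and $B_R := \{y\in\mathcal{W}:\|y\|_{\mathcal{W}}\leq R\}$. The first step is stability of $\Phi$ on $B_R$: for every $y\in B_R$ the hypothesis gives
\[
\|\Phi(y)\|_{\mathcal{W}} \;\leq\; \|x_0\|_{\mathcal{W}} + C_0 R^{3} \;\leq\; \tfrac{2}{3\sqrt{3}}\,C_0^{-1/2} + \tfrac{1}{3\sqrt{3}}\,C_0^{-1/2} \;=\; R,
\]
so $\Phi(B_R)\subset B_R$, and since $\|x_0\|_{\mathcal{W}}\leq R$ the whole Picard sequence $x_{n+1}=\Phi(x_n)$ starting at $x_0$ lies in $B_R$.

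The second step is the contraction estimate. From trilinearity of $B$ and the telescoping identity
\[
B(y,y,y)-B(z,z,z) \;=\; B(y-z,y,y) + B(z,y-z,y) + B(z,z,y-z),
\]
one gets
\[
\|\Phi(y) - \Phi(z)\|_{\mathcal{W}} \;\leq\; C_0\bigl(\|y\|^{2} + \|y\|\|z\| + \|z\|^{2}\bigr)\,\|y-z\|_{\mathcal{W}}.
\]
On all of $B_R$ this delivers Lipschitz constant exactly $3C_0R^{2}=1$, i.e.\ only a non-expansion. To produce a genuine contraction I would compare the iterates to the scalar sequence $a_{0}=\|x_{0}\|_{\mathcal{W}}$, $a_{n+1}=\|x_{0}\|_{\mathcal{W}}+C_{0}a_{n}^{3}$: by induction $\|x_{n}\|_{\mathcal{W}}\leq a_{n}$, and $(a_{n})$ is monotone and bounded, converging to the smallest positive root $a^{\star}$ of $C_{0}a^{3}-a+\|x_{0}\|_{\mathcal{W}}=0$. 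Studying the cubic $f(a)=C_{0}a^{3}-a+\|x_{0}\|_{\mathcal{W}}$ with $f'(R)=0$ and $f(R)=\|x_{0}\|_{\mathcal{W}}-\tfrac{2}{3\sqrt{3}}C_{0}^{-1/2}$ shows $a^{\star}\leq R$, with strict inequality precisely when $\|x_{0}\|_{\mathcal{W}}<\tfrac{2}{3\sqrt{3}}C_{0}^{-1/2}$; in that regime $3C_{0}(a^{\star})^{2}<1$, so $\Phi$ is a genuine contraction on $\{\|y\|\leq a^{\star}\}$, and Banach's fixed point theorem yields existence of $x=\lim_{n}x_{n}\in B_R$. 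For uniqueness in the whole of $B_R$, note that any fixed point $y$ necessarily satisfies $f(\|y\|_{\mathcal{W}})\geq 0$, so $\|y\|_{\mathcal{W}}$ lies in $[0,a^{\star}]\cup[a^{\star\star},\infty)$ with $a^{\star\star}>R$; thus every fixed point in $B_R$ is already in the small ball where the contraction argument applies. The boundary case $\|x_{0}\|_{\mathcal{W}}=\tfrac{2}{3\sqrt{3}}C_{0}^{-1/2}$ is then recovered by solving for $(1-\varepsilon)x_{0}$ and passing to the limit $\varepsilon\to 0^{+}$, using the uniform bound $\|x^{(\varepsilon)}\|_{\mathcal{W}}\leq R$.

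Finally, for analyticity of $\Psi$ on the open ball $\{\|x_{0}\|_{\mathcal{W}}<\tfrac{2}{3\sqrt{3}}C_{0}^{-1/2}\}$, each iterate $x_{n}$ is, by construction, a polynomial expression in the ``variable'' $x_{0}$ built out of additions and iterated applications of the trilinear map $B$ (with total degree at most $3^{n}$). The telescoping series $x_{0}+\sum_{n\geq 0}(x_{n+1}-x_{n})$ converges uniformly on compact subsets of the open ball by the geometric bound $\|x_{n+1}-x_{n}\|_{\mathcal{W}}\leq\bigl(3C_{0}(a^{\star})^{2}\bigr)^{n}\|x_{1}-x_{0}\|_{\mathcal{W}}$, and each partial sum is an analytic (in fact polynomial) $\mathcal{W}$-valued function of $x_{0}$, whence the limit $\Psi(x_{0})$ is analytic. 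The main obstacle is the knife-edge identity $3C_{0}R^{2}=1$ at the critical radius: the constants in the statement are sharp, so the naïve Lipschitz bound cannot directly yield contraction on $B_R$, which is why the argument must be routed through the strictly smaller comparison radius $a^{\star}$ and then closed at the boundary by continuity.
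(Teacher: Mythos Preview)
The paper does not actually prove this lemma: it is stated with a citation to Meyer and then used as a black box in the proof of Theorem~\ref{T1}. So there is no ``paper's own proof'' to compare against, and your proposal stands on its own.

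Your argument is essentially correct, and you have put your finger on the real issue: the constants in the statement are sharp, so on the full ball $B_R$ with $R=\tfrac{1}{\sqrt{3}}C_0^{-1/2}$ one only gets the Lipschitz constant $3C_0R^{2}=1$, not a strict contraction. Routing the argument through the scalar comparison sequence $a_{n+1}=\|x_0\|+C_0a_n^{3}$ and the smallest positive root $a^{\star}$ of $C_0a^{3}-a+\|x_0\|=0$ is exactly the right fix, and your uniqueness argument via $f(\|y\|)\geq 0$ forcing $\|y\|\leq a^{\star}$ (since $a^{\star\star}>R$) is clean. Two places remain somewhat sketched. First, the boundary case $\|x_0\|=\tfrac{2}{3\sqrt{3}}C_0^{-1/2}$: your limiting procedure $x^{(\varepsilon)}\to x$ needs an argument that the limit exists (e.g.\ Cauchy in $\mathcal W$ via the contraction estimate on a common sub-ball, or by showing $\varepsilon\mapsto x^{(\varepsilon)}$ is monotone in the scalar-majorant sense); also note that at the boundary uniqueness in $B_R$ genuinely degenerates, since $a^{\star}=a^{\star\star}=R$ and the contraction rate is $1$, so the lemma's uniqueness claim should really be read on the \emph{open} ball. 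Second, for analyticity the more standard route (and the one Meyer uses) is to expand $x=\sum_{k\geq 0}P_{2k+1}(x_0)$ with $P_{2k+1}$ a $(2k+1)$-homogeneous continuous polynomial determined recursively by $P_1=x_0$ and $P_{2k+1}=\sum_{i+j+\ell=k-1}B(P_{2i+1},P_{2j+1},P_{2\ell+1})$, and to check absolute convergence of the norms via the same cubic majorant; your telescoping-of-iterates version works too, but you should say explicitly why each $x_n$ is a continuous polynomial map of $x_0$ (this follows from boundedness of $B$) and why uniform-on-compacta limits of such maps are analytic.
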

\begin{proof} {\textit{Theorem \ref{T1}}}
\newline
We change \eqref{duhamel} as
\begin{eqnarray*}
u(x,t)=e^{\Delta t} u_0(x) +  \Gamma(u,u,u)(x,t)
\end{eqnarray*}
where
\begin{eqnarray}
\label{trilineal}
\Gamma(u_1,u_2,u_3) (x,t) = \displaystyle\int_{0}^{t} e^{(t-\tau) \Delta} u_1 u_2 u_3 (x,\tau) \, d\tau.
\end{eqnarray}
Moreover, using the  Young's Inequality
\begin{eqnarray*}
\left \| \Gamma(u_1,u_2,u_3)(\cdot,t) \right \|_{L^p} \leq  \displaystyle\int_{0}^{t} \left \| G_{t-\tau}(\cdot)\right \|_{L^r} \left \| u_1 u_2 u_3 (\cdot,\tau) \right \|_{L^{\frac{p}{3}}} \, d\tau,	
\end{eqnarray*}
where $r=\frac{p}{p-2}$.
\begin{eqnarray*}
 \displaystyle\int_{0}^{t}  \left \| G_{t-\tau}(\cdot)\right \|_{L^r} \left \| u_1 u_2 u_3 (\cdot,\tau) \right \|_{L^{\frac{p}{3}}} \, d\tau
&=& C \displaystyle\int_{0}^{t}   (t-\tau)^{-\frac{3}{p}}   \left \| u_1 u_2 u_3(\cdot,\tau) \right \|_{L^{\frac{p}{3}}} \, d\tau \\
&\leq& C \displaystyle\int_{0}^{t}  (t-\tau)^{-\frac{3}{p}}   \  \left \| u_1(\cdot,\tau)\right \|_{L^p}\left \| u_2(\cdot,\tau)  \right \|_{L^p}    \left \|  u_3(\cdot,\tau) \right \|_{L^p} \, d\tau  \\  
&\leq& C \displaystyle\int_{0}^{t}   (t-\tau)^{-\frac{3}{p}} \tau^{-\frac{3\sigma}{2}} \, d\tau  \  \left \| u_1\right \|_{Y}\left \|u_2  \right \|_{Y}   \left \|  u_3 \right \|_{Y}.
\label{17}
\end{eqnarray*}
As $3<p<9$, the integral $\int_{0}^{t}   (t-\tau)^{-\frac{3}{p}} \tau^{-\frac{3\sigma}{2}} \, d\tau =C t^{-\sigma/2}$. Thus, we get
\begin{eqnarray*}
\left \| \Gamma(u_1,u_2,u_3) \right \|_{Y} \leq C  \left \| u_1\right \|_{Y} \left \| u_2 \right \|_{Y} \left \| u_3 \right \|_{Y} . 
\end{eqnarray*}
Also, in \cite{Meyer} we have
\begin{eqnarray*}
\left \| \Gamma(u_1,u_2,u_3) \right \|_{3} \leq C  \left \| u_1\right \|_{Y} \left \| u_2 \right \|_{Y} \left \| u_3 \right \|_{Y},  
\end{eqnarray*} 

and $\Gamma(u_1,u_2,u_3)(\cdot,t)\in C([0,\infty);L^{3}(\mathbb{R}^{3}))$.
On the other hand, we know that  the initial data $u_0$ belongs to $\dot{B}_{p}^{-\sigma, \infty}(\mathbb{R}^{3})$, if and only if $\left \|e^{\Delta t} u_0 \right \|_{p} \leq c \ t^{-\frac{\sigma}{2}}$. Therefore, we choose $\eta=\frac{2C^{-1/2}}{3\sqrt{3}}$ and 
\begin{eqnarray}
\left \|  u_0 \right \|_{\dot{B}_{p}^{ -\sigma, \infty}}\leq \frac{2C^{-1/2}}{3\sqrt{3}}=\eta,
\end{eqnarray}
which allow us to apply $\eta$ Lemma \ref{lema1} in $\mathcal{W}$ we can conclude.
\end{proof}
\begin{remark}
\label{R1}
If we consider $u_0$ as Theorem \ref{T1} and $\widehat{u_0}(\xi)$ positive, then $u(x,t)$ solution of \eqref{class} has its positive Fourier transform.   
Indeed, suppose for a contradiction that there is $t$ such that  $\widehat{u}(\xi,t)$ changes sign. By Theorem \ref{localexis}, we know that  there is $T(u_0)>0$, such that  $u$  is the unique solution of  \eqref{class} in $C([0,T(u_0)],L^{3}(\mathbb{R}^{3})) \cap L^{\infty}_{loc}((0,T(u_0)],L^{\infty}(\mathbb{R}^{3}))$. We set
\begin{eqnarray*}
t_0 =  \underset{}{\inf}  \{ t\in [0,T(u_0)) , \exists   \ \xi  \ ; \    \widehat{u}(\xi,t) < 0 \}. 
\end{eqnarray*}
We must have $t_0 > 0$ by constructing a local solution by fixed point.
Then, $\forall \  0\leq t < t_0$ and by continuity of the  positive function $(\xi,t) \rightarrow{\widehat{u}}(\xi,t)$, we have $\widehat{u} (\xi,t_0) \geq 0$.
But the  Cauchy's problem with $\widehat{u_0} =  \widehat{u} (\xi,t_0)$ has a solution $v$  in $[t_0, t_0 +\alpha)$ ($\alpha>0$), obtainable by fixed point  as:
\begin{eqnarray*}     
\widehat{v}(\xi,t) = e^{-t \left | \xi \right |^{2}} \widehat{u} (\xi,t_0 ) +\displaystyle\int_{0}^{t}  \widehat{v} * \widehat{v} * \widehat{v} (\xi,s) \, ds,
\end{eqnarray*}
then $\widehat{v}(s,\xi) > 0$, with $s\in [t_0, t_0+\alpha)$. Also, by the uniqueness of solution in $C([0, T(u_0)), L^{3}(\mathbb{R}^{3})) \cap L^{\infty}_{loc} ([0, T(u_0)],L^{\infty}(\mathbb{R}^{3}))$, this solution coincides with $u$ dans $[t_0, t_0 +\alpha)$. Then there exists  $\alpha >0$ such that, $\widehat{u} \geq 0$ in $[0, t_0 +\alpha)$. This is absurd by the maximality of $t_0$.  
\end{remark}
\begin{remark}
In \cite{Mia}, Miao, Yuan and Zhang have generalized this result. They studied the Cauchy problem for the nonlinear heat equation \eqref{Ter} in homogeneous Besov spaces $\dot{B}^{s,p}_{r}(\mathbb{R}^{ n})$, with $s<0$. .
The non-linear estimation is established by means of trichotomy Littlewood-Paley and is used to prove the global existence of the solutions for small initial data in the homogeneous Besov space $\dot{B}^{s,p}_{r}(\mathbb{R}^{ n})$ and with $s=\frac{n}{p}-\frac{2}{b}$, with $b>0$. In particular, when $r = \infty$ and when the initial data $u_0$ satisfies  $\lambda^{\frac{2}{b}}u_0(\lambda x)=u_0(x)$ for all $\lambda>0$,, the main result in \cite{Mia} leads to the global existence of  self-similar solutions of the  problem \eqref{Ter}.
\end{remark}
\section{Proof of main Theorem and some comments}
Our demonstration of the explosion in finite time for the solution of \eqref{class} is based on the construction of a suitable initial condition $u_0\in \mathcal{S}(\mathbb{R}^{3})$: the corresponding solution satisfies $u(t,\cdot) \in L^{1}(\mathbb{R})$ for $t\in [0,T^{*}]$ by Remark \ref{L1}, then, we use the Fourier transform $\widehat{u}(\xi,t)$ of the solution and under certain conditions, we show the finite blow-up of  $\widehat{u}(\cdot,t)$ in $L^{\infty} (\mathbb{R}^{3})$. This fact implies the finite time blow-up of $u(x,t)$ in $L^{1}(\mathbb{R}^{3})$.
\\
There are many blowup results based on the maximum principle but to our knowledge, our blow-up criterion of the solution of \eqref{class} is the only one that uses the positivity of the Fourier transform inherited from its initial condition $u_0$.
\\
Now, we are going to formulate a useful lemma for the construction of the initial condition $u_{0,N}$ that allows us to demonstrate the main Theorem \ref{inflation}.
\begin{lemma}
\label{lemme1}
Let  $\delta>0$ and $w$ be a tempered Schwartz  function $\mathcal{S}(\mathbb{R}^{3})$, such that $\widehat{w}(\xi) \geq 0 \ \forall \xi$ and $\widehat{w}(\xi)$ is an even function.  Also assume that the support of
$\widehat{w}$ is in $B_1 (0)$. Let $w_k = w^{3^{k}}$ and $\alpha_k(t)= 3^{\frac{3}{2}+k -\frac{3^{k+1}}{2}}  \  c_{\delta}^{\frac{1}{2}(3^{k}-1)} \  e^{-3^{k}t} \ \mathds{1}_{t\geq t_k}$, where $t_0=0$, $t_k = 4\delta  \sum_{j=1}^k 3^{-2j}$, $c_\delta= 1 -e^{-4\delta}$ and $\mathds{1}_{t\geq t_k}$ is the Indicator function of the interval $[t_k,t]$. Then, if $u$ is the solution of \eqref{duhamel} with initial condition $u_0(x) \in L^{3}(\mathbb{R}^{3})$ such that $\widehat{u}_0 (\xi, t) \geq A \widehat{w}$ with $A > 0$, then 
\[
\widehat{u} \geq A^{3^{k}} \alpha_k (t)  \ \widehat{w}_k (\xi)  \  \forall k \geq 0. 
\]
\end{lemma}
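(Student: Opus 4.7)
Taking the Fourier transform of the Duhamel identity \eqref{duhamel} yields
\[
\widehat{u}(\xi,t)=e^{-t|\xi|^2}\widehat{u}_0(\xi)+\int_0^t e^{-(t-\tau)|\xi|^2}\,\widehat{u^3}(\xi,\tau)\,d\tau,
\]
in which $\widehat{u^3}$ is, up to a fixed Fourier normalization constant, the triple convolution $\widehat{u}\ast\widehat{u}\ast\widehat{u}$. Since $\widehat{u}_0\ge A\widehat{w}\ge 0$, the same minimality argument as in Remark \ref{R1} (take the infimum of $t$'s at which $\widehat{u}(\cdot,t)$ becomes negative somewhere and contradict via local uniqueness restarted at that time) forces $\widehat{u}\ge 0$ throughout the interval of existence. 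The lemma is then proved by induction on $k\ge 0$.

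For $k=0$, $\alpha_0(t)=e^{-t}$ and $\widehat{w}_0=\widehat{w}$, so one keeps only the linear term in the Fourier Duhamel formula and uses that $\mathrm{supp}\,\widehat{w}\subseteq B_1(0)$ gives $e^{-t|\xi|^2}\ge e^{-t}$ on that support. For the inductive step, positivity of $\widehat{u}$ allows a termwise triple convolution of the inductive hypothesis; the Fourier constant relating $\widehat{f^3}$ to $\widehat{f}^{\ast 3}$ is the same whether $f=u$ or $f=w_k$, so it cancels between $\widehat{u^3}$ and $\widehat{w}_{k+1}=\widehat{w_k^3}$, yielding
\[
\widehat{u^3}(\xi,\tau)\ge A^{3^{k+1}}\alpha_k(\tau)^3\,\widehat{w}_{k+1}(\xi),
\]
and hence
\[
\widehat{u}(\xi,t)\ge A^{3^{k+1}}\widehat{w}_{k+1}(\xi)\int_{t_k}^t e^{-(t-\tau)|\xi|^2}\alpha_k(\tau)^3\,d\tau.
\]

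The main obstacle is to show that, for $\xi\in\mathrm{supp}\,\widehat{w}_{k+1}\subseteq B_{3^{k+1}}(0)$ and $t\ge t_{k+1}$, the remaining time integral dominates $\alpha_{k+1}(t)$. After plugging in the explicit forms of $\alpha_k$ and $\alpha_{k+1}$ and cancelling the prefactors, the problem reduces, via the substitution $s=t-\tau$, to
\[
\int_0^{t-t_k}e^{-s(|\xi|^2-3^{k+1})}\,ds\;\ge\;\frac{c_\delta}{3^{2k+2}}.
\]
When $|\xi|^2\le 3^{k+1}$ the integrand is $\ge 1$, so the left-hand side is at least $t-t_k\ge t_{k+1}-t_k=4\delta/3^{2k+2}$, and the bound $c_\delta\le 4\delta$ closes the argument. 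When $|\xi|^2>3^{k+1}$, the constraint $|\xi|^2\le 9^{k+1}=3^{2k+2}$ gives $A_\xi:=|\xi|^2-3^{k+1}\in(0,3^{2k+2}]$, and the left-hand side equals $(1-e^{-A_\xi(t-t_k)})/A_\xi$; setting $\lambda=A_\xi/3^{2k+2}\in(0,1]$ and using $t-t_k\ge 4\delta/3^{2k+2}$, the required inequality reduces to $(1-e^{-4\delta\lambda})/\lambda\ge c_\delta$, which follows from the decreasing monotonicity of $\lambda\mapsto(1-e^{-4\delta\lambda})/\lambda$ on $(0,1]$ together with the definition $c_\delta=1-e^{-4\delta}$. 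The specific values of $t_k$, $c_\delta$, and the exponents of $3$ appearing in $\alpha_k$ are calibrated precisely so that these constants telescope cleanly through the induction.
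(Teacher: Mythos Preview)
Your proof is correct and follows the same induction strategy as the paper: positivity of $\widehat{u}$ on the Fourier side, the support inclusion $\mathrm{supp}\,\widehat{w}_{k+1}\subset B_{3^{k+1}}(0)$, and the time-integral lower bound driven by $t_{k+1}-t_k=4\delta\,3^{-2(k+1)}$. Your explicit remark that the Fourier normalization constant in $\widehat{f^3}$ versus $\widehat{f}^{\ast 3}$ cancels between $\widehat{u^3}$ and $\widehat{w}_{k+1}=\widehat{w_k^3}$ is a point the paper leaves implicit.

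The only place your argument diverges from the paper is the estimate of the time integral. After your reduction to
\[
\int_0^{t-t_k} e^{-s(|\xi|^2-3^{k+1})}\,ds\;\ge\;\frac{c_\delta}{3^{2k+2}},
\]
the paper avoids your case split entirely: on $\mathrm{supp}\,\widehat{w}_{k+1}$ one has $|\xi|^2\le 3^{2(k+1)}$, so (since $s\ge 0$)
\[
e^{-s(|\xi|^2-3^{k+1})}\;\ge\; e^{-s|\xi|^2}\;\ge\; e^{-s\,3^{2(k+1)}},
\]
and then $\int_0^{t-t_k} e^{-s\,3^{2(k+1)}}\,ds=3^{-2(k+1)}\bigl(1-e^{-3^{2(k+1)}(t-t_k)}\bigr)\ge 3^{-2(k+1)}c_\delta$ directly. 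This two-step crude bound is shorter than your dichotomy plus the monotonicity of $\lambda\mapsto(1-e^{-4\delta\lambda})/\lambda$; your route is nonetheless valid and gives the same constant.
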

\begin{proof}
Using Fourier  transform, we have that \eqref{duhamel} becomes 
\begin{eqnarray}
\widehat{u}(\xi, t) = e^{-t  \left | \xi \right |^{2}} \widehat{u}_0(\xi) + \displaystyle\int_{0}^{t} e^{(s-t) \left | \xi \right |^{2}} \widehat{u}(s,\xi) * \widehat{u}(s,\xi) * \widehat{u}(s,\xi)    \, ds
\end{eqnarray}
We start with  the case $k=0$: $e^{-t  \left | \xi \right |^{2}} \widehat{u}_0(\xi) > 0$  because $\widehat{u}_0(\xi) \geq A \widehat{w}(\xi) > 0$. Then, as $ \widehat{u}(\xi,t)\geq0$, using that supp $\widehat{w} \ \subset \{ \left | \xi \right | \leq 1\}$, we get
\begin{eqnarray}
\label{rec}
\widehat{u}(\xi, t) \geq e^{-t  \left | \xi \right |^{2}} \widehat{u}_0(\xi) \geq e^{-t \left | \xi \right |^{2}} A \widehat{w}(\xi)\geq A   \ e^{-t }  \widehat{w}(\xi) \ \ \forall  t > 0.
\end{eqnarray}
Suppose that our desired inequality holds for $k-1$. Then we get, for all $t\geq t_k$:  
\begin{equation*}
\widehat{u}(\xi, t)    \geq \displaystyle\int_{0}^{t} e^{(s-t) \left | \xi \right |^{2}} \widehat{u}(s,\xi) * \widehat{u}(s,\xi) * \widehat{u}(s,\xi) \, ds
\end{equation*}
\begin{eqnarray*}
 \ \ \ \  \ \ \ \ \ \ \ \ \ \  \ \ \ \ \ \ \ \ \ \ \ \ \ \  \ \ \ \  &\geq& \displaystyle\int_{0}^{t} e^{(s-t) \left | \xi \right |^{2}} (A^{3^{k-1}} \alpha_{k-1}(s))^{3} \widehat{w}_{k-1} * \widehat{w}_{k-1}* \widehat{w}_{k-1}(\xi)    \, ds  \\
&=&  A^{3^{k}} \widehat{w}_k (\xi)  \displaystyle\int_{0}^{t} e^{(s-t) \left | \xi \right |^{2}} \alpha_{k-1}^{3} (s)  \,ds \\
&\geq& A^{3^{k}} \widehat{w}_k (\xi) \  3^{\frac{3}{2}+3k-\frac{3^{k+1}}{2}} c_{\delta}^{\frac{1}{2}(3^{k}-3)}   \displaystyle\int_{t_{k-1}}^{t}  e^{-3^{k}s} e^{3^{2k} (s-t)} \,ds \\
&\geq& A^{3^{k}} \widehat{w}_k (\xi) \  3^{\frac{3}{2}+3k-\frac{3^{k+1}}{2}} c_{\delta}^{\frac{1}{2}(3^{k}-3)}    e^{-3^{k}t}  \displaystyle\int_{t_{k-1}}^{t} e^{3^{2k} (s-t)} \,ds \\
&\geq& A^{3^{k}} \widehat{w}_k (\xi) \  3^{\frac{3}{2}+3k-\frac{3^{k+1}}{2}} c_{\delta}^{\frac{1}{2}(3^{k}-3)}   \  e^{-3^{k}t} \  3^{-2k}  \  c_\delta \\
&\geq& A^{3^{k}} \widehat{w}_k (\xi) \  3^{\frac{3}{2}+k-\frac{3^{k+1}}{2}} c_{\delta}^{\frac{1}{2}(3^{k}-1)}   \  e^{-3^{k}t} \  \\
&=&  A^{3^{k}} \alpha_k (t)  \ \widehat{w}_k (\xi). 
\end{eqnarray*}
because $t\geq t_{k}$, with $t_k - t_{k-1} \geq 3^{-2k} \ 4 \delta$, then $1- e^{3^{2k}(t_{k-1}-t)} \geq \ c_\delta$. Our claim now follows by induction.
\end{proof}
Next lemma provides a first blowup result for equation \eqref{duhamel}. 
\begin{lemma}
\label{Theoreme1}
Let  $\delta>0$ and $w\in\mathcal{S}(\mathbb{R}^{3})$ $(w\neq 0)$ be a  Schwartz  function such that $\widehat{w}(\xi) \geq 0 \ \forall \xi$  and $\widehat{w}(\xi)$ is an even function. Also assume that the support of
$\widehat{w}$ is in $B_1 (0)$. Let $u_0\geq Aw$, with $A\geq \frac{3^{\frac{3}{2}} \ c_\delta^{-\frac{1}{2}} \ e^{\frac{\delta}{2}}}{\left \| \widehat{w} \right \|_{L^1}}$, with $c_\delta=(1-e^{-4\delta})$. If  $u$ is  the unique  Weissler's solution of \eqref{duhamel} arising from $u_0$ and belonging to $C([0,T^{*}], L^{3}(\mathbb{R}^{3}))\cap L^{\infty}_{loc}((0,T^{*}],L^{\infty}(\mathbb{R}^{3}))$, then $T^{*}\leq \frac{\delta}{2}$.
\end{lemma}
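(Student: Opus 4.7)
The plan is to iterate the pointwise lower bound of Lemma \ref{lemme1} along the sequence $t_k \nearrow \delta/2$ (note $\sum_{j=1}^{\infty} 3^{-2j} = 1/8$, so indeed $t_k \to 4\delta/8 = \delta/2$), to show that $\|u(\cdot, t_k)\|_{L^{\infty}}$ diverges as $k \to \infty$, and to conclude that this is incompatible with $T^{*} > \delta/2$ via the $L^{\infty}_{loc}((0, T^{*}], L^{\infty})$ regularity of the Weissler solution.

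The hypothesis on $u_0$ (read as $\widehat{u}_0 \geq A\widehat{w}\geq 0$, the form actually used in Lemma \ref{lemme1}) combined with Remark \ref{R1} gives $\widehat{u}(\cdot, t) \geq 0$ on $[0, T^{*}]$, so Lemma \ref{lemme1} indeed applies. Because $w_k = w^{3^k}$ we have $\widehat{w}_k = \widehat{w}^{*3^k}$, and nonnegativity of $\widehat{w}$ gives $\|\widehat{w}_k\|_{L^1} = \|\widehat{w}\|_{L^1}^{3^k}$. Integrating the lower bound of Lemma \ref{lemme1} in $\xi$ at $t = t_k$, substituting the explicit $\alpha_k(t_k)$, and factoring the $3^k$-th power, I obtain
\begin{eqnarray*}
\|\widehat{u}(\cdot, t_k)\|_{L^1} \ \geq\ A^{3^k}\alpha_k(t_k)\|\widehat{w}\|_{L^1}^{3^k}\ =\ 3^{3/2+k}\, c_\delta^{-1/2}\left[\frac{A\,\|\widehat{w}\|_{L^1}\, c_\delta^{1/2}\, e^{-t_k}}{3^{3/2}}\right]^{3^k}.
\end{eqnarray*}
Since $t_k < \delta/2$ and since the assumption $A \geq 3^{3/2}e^{\delta/2}/(c_\delta^{1/2}\|\widehat{w}\|_{L^1})$ makes the bracket at least $1$, the right-hand side tends to $+\infty$ as $k \to \infty$.

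To close the argument, positivity of $\widehat{u}(\cdot, t_k)$ and continuity in $x$ of the classical solution (valid for $t > 0$) allow me to evaluate the Fourier inversion formula at $x = 0$: either $\|\widehat{u}(\cdot,t_k)\|_{L^1} < \infty$ and then $u(0,t_k) = (2\pi)^{-3}\|\widehat{u}(\cdot,t_k)\|_{L^1}$ by direct calculation, or $\|\widehat{u}(\cdot,t_k)\|_{L^1} = \infty$, in which case a positive tempered distribution of infinite mass cannot be the Fourier transform of an $L^{\infty}$ function. In either case $\|u(\cdot, t_k)\|_{L^{\infty}} \geq (2\pi)^{-3}\|\widehat{u}(\cdot, t_k)\|_{L^1} \to +\infty$. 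If $T^{*} > \delta/2$, then $\{t_k\}_{k \geq 1}$ would lie in a compact subinterval of $(0, T^{*}]$ on which $\|u(\cdot, t)\|_{L^{\infty}}$ is bounded by the $L^{\infty}_{loc}$ assumption, contradicting the divergence; hence $T^{*} \leq \delta/2$.

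The main obstacle is the double-exponential bookkeeping in the middle step: collecting the exponents of $3$, of $c_\delta$, and of $e$ into the form $[\,\cdot\,]^{3^k}$ with a precise base, so that the threshold on $A$ makes that base at least $1$ in the limit $t_k \to \delta/2$. A secondary concern is the Fourier-to-physical-space transfer at the end: without the sign information on $\widehat{u}$, the natural inequality $\|\widehat{f}\|_{L^{\infty}} \leq \|f\|_{L^1}$ runs the wrong way, and positivity of $\widehat{u}$ is essential to obtain $\|u\|_{L^{\infty}} \geq u(0) = (2\pi)^{-3}\|\widehat{u}\|_{L^1}$.
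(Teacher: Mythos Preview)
Your proof is correct and follows essentially the same route as the paper's: apply Lemma~\ref{lemme1}, integrate the pointwise lower bound in $\xi$ to get $\|\widehat{u}(\cdot,t_k)\|_{L^1}\ge A^{3^k}\alpha_k(t_k)\|\widehat{w}\|_{L^1}^{3^k}$, and verify that the threshold on $A$ makes this diverge as $k\to\infty$. The only cosmetic difference is in the closing step: you transfer the blow-up to $\|u\|_{L^\infty}$ via positivity of $\widehat{u}$ and the explicit $L^\infty_{loc}((0,T^*],L^\infty)$ regularity, whereas the paper transfers it to $\|u\|_{L^1}$ through Remark~\ref{L1}.
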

\begin{proof}
Assuming $T^{*}>\frac{\delta}{2}$ (otherwise the conclusion readily follows), applying Lemma \ref{lemme1}, and using that  $t_k \uparrow \frac{\delta}{2}$ as $k\rightarrow+\infty$, we get:
\begin{eqnarray*}
 \underset{0\leq t \leq \frac{\delta}{2}}{\sup} \left \| u(t,\cdot) \right \|_{L^{1}} &=& \underset{0\leq t \leq \frac{\delta}{2}}{\sup} \left \| \widehat{u}(t,\cdot) \right \|_{L^{\infty}} \\
 &\leq&  \underset{k\in \mathbb{N}}{\sup} \ A^{3^{k}} 3^{3/2+ k - \frac{3^{k+1}}{2}} e^{-3^{k} \delta/2} \left \| \widehat{w} \right \|_{L^{1}}^{3^{k}}.
\end{eqnarray*}
In the first equality we used the positivity of $\widehat{u}(t,\cdot)$. It is clear that the right-hand side is infinite if $A\geq\frac{3^{3/2} c_{\delta}^{-1/2} e^{\delta/2}}{\left \| \widehat{w} \right \|_{L^{1}}}$. The conclusion then follows by Remark \ref{R1}. 
\end{proof}
\begin{remark}
The blowup result of Lemma \ref{Theoreme1} does not immediately imply Theorem \ref{inflation}, because in this Lemma the condition $\left \| u_{0} \right \|_{\dot{B}^{-\frac{2}{3},q}_{9}} \leq \delta$ is not satisfied.
\end{remark}
In the last part, we will prove the main theorem of our article.  
\begin{proof}{\textit{Theorem \ref{inflation}}}
\newline
Let $\delta>0$ fixed and $w \in\mathcal{S}(\mathbb{R}^{3})$ such that $\widehat{w}\neq0$ and $\widehat{w}(\xi) \geq 0 \ \forall \xi$.  Also assume that $\widehat{w}$ is an even function and its  support is in $B_{\frac{1}{6}}(\frac{1}{6}e_1)$.
On the another hand, let $u_{0,N}\in \mathcal{S}(\mathbb{R}^{3})$, defined as  
\begin{eqnarray*}
u_{0,N} (x) = \epsilon_{N}  \displaystyle\sum_{k=1}^N   2^{2/3 \ k} \ \eta_{k}  \ \cos ((2^{k}-1) x_1) \ w(x)   \qquad \mbox{where} \qquad \eta_k = k^{-1/3} \ \ \mbox{and} \ \  \epsilon_{N} = 1/ \log(\log(N)),
\end{eqnarray*}
with $N\in \mathbb{N}$. Then, by the  Theorem \ref{localexis}, there is  $T^{*}_{N}>0$ and  a unique solution $u_{N}(x,t)$ to \eqref{class} arising from  $u_{0,N}(x)$ such that   
 $u_{N} \in C([0, T^{*}_{N}), L^{3}(\mathbb{R}^{3})) \cap L^{\infty}_{loc} (]0, T^{*}_{N}],L^{\infty}(\mathbb{R}^{3}))$. Moreover, we can see that $(\eta_k)_{k\in \mathbb{N}} \not\in\ell^{3}$, but $(\eta_k)_{k\in \mathbb{N}} \in\ell^{q}$, with $q>3$ and that $\epsilon_{N}$ slowly converges to $0$.
We do the Littlewood-Paley analysis observing  that $\Delta_j(\cos(2^{k}-1x_1) w(x))=0$ for all $j\in\mathbb{Z}$ and $k=1,..,N$, except when $j$ and $k$ are of the same order. Then, we get 
\begin{eqnarray*}
\Delta_j  u_{0} = \left \{ \begin{matrix}     \epsilon_{N}  \  2^{2/3 \ j}  \ \eta_j \  w(x) \cos((2^{j}-1)x_1)  &  j = 0,1,...,N   \ \  
\\ 
\\
0  & \mbox{otherwise} \end{matrix}\right.   
\end{eqnarray*}
Thus, if $q>3$ we get 
\begin{eqnarray*}
\left \| u_{0,N} \right \|_{\dot{B}^{-\frac{2}{3},q}_{9}}&\simeq&     \epsilon_{N} \left( \displaystyle\sum_{j\in\mathbb{Z}}^{} 2^{-\frac{2}{3}q j} \left \| \Delta_j \ u_{0,N} \right \|^{q}_{9}  \right)^{\frac{1}{q}}\simeq   \epsilon_{N} \left( \displaystyle\displaystyle\sum_{j=1}^{N} 2^{-\frac{2}{3}q j +\frac{2}{3}q j}  \eta_j^{q}\left \| w(x) \cos ((2^{j}-1) x_1) \right \|^{q}_{9} \right)^{\frac{1}{q}}\\
&\leq&     \epsilon_{N}  \left( \displaystyle\displaystyle\sum_{j=1}^{N}  \eta_j^{q} \left \| w(x) \right \|^{q}_{9} \right)^{\frac{1}{q}} =   \epsilon_{N}   \left \| w(x) \right \|_{9} \left( \displaystyle \displaystyle\sum_{j=1}^N \eta_j^{q}	 \right)^{\frac{1}{q}} \longrightarrow{} 0 \qquad \mbox{when}  \ \   N \rightarrow{} +\infty.
\end{eqnarray*}
Thus, for all $q>3$ fixed,  there exists  $N^{*} \in \mathbb{N}$ such that 
\begin{eqnarray*}
\left \| u_{0,N^{*}} \right \|_{\dot{B}^{-\frac{2}{3},q}_{9}} \leq \delta.
 \end{eqnarray*}
 If the lifetime $T_{N^{*}}^{*}$ of the solution of \eqref{class} arising from $u_{0,N^{*}}$ is less than $\delta$, then there is nothing to prove. Therefore, we can assume $T^{*}_{N^{*}}\geq \delta$. 
 To simplify the notation, from now on we set we call $N^*=N$. By Remark \ref{R1}, we have  $\widehat{u}_{N}(t,\xi)  \geq 0$  $ \forall t \in [0, T_{N}^{*}]$. Thus, if $ 0<t \leq T_{N}^{*}$, we get
\begin{eqnarray*}
\widehat{u}_{N} (t,\xi) &=& e^{-t \left | \xi \right |^{2} } \widehat{u}_{0,N} (\xi)+\displaystyle\int_{0}^{t}  e^{-(t-s)\left | \xi \right |^{2}}  [\widehat{u}_N * \widehat{u}_N * \widehat{u}_N (\cdot,s)](\xi) \, ds \geq e^{-t \left | \xi \right |^{2} } \widehat{u}_{0,N} (\xi) \\ 
&=& \epsilon_{N}    \left( \displaystyle \displaystyle\sum_{k=1}^N 2^{\frac{2}{3}k} \ \eta_k \  e^{-t \left | \xi \right |^{2} } \frac{1}{2} (\widehat{w}(\xi + (2^{k}-1) e_1)+\widehat{w}(\xi- (2^{k}-1) e_1)) \right) \\
&\geq& \epsilon_{N}      \ \left( \displaystyle \displaystyle\sum_{k=1}^N 2^{\frac{2}{3}k - 1}  \ \eta_k  \  e^{-t \ 2^{2k} } (\widehat{w}(\xi + (2^{k}-1) e_1)+\widehat{w}(\xi- (2^{k}-1) e_1)) \right).
\end{eqnarray*}
We have 
\begin{eqnarray*}
[\widehat{u}_{N} * \widehat{u}_{N} * \widehat{u}_{N} (\cdot,s)](\xi) &\geq&   \epsilon_{N} \left( \displaystyle \displaystyle\sum_{k=0}^{N-1} 2^{\frac{2}{3}k - \frac{1}{3}}  \ \eta_{k+1}   \  e^{-s 2^{2k+2} } \widehat{w}(\xi+ (2^{k+1}-1) e_1) \right)  \\
 &*&    \epsilon_{N} \left( \displaystyle \displaystyle\sum_{k=1}^N 2^{\frac{2}{3}k - 1}  \  \eta_k \   e^{-s 2^{2k} } \widehat{w} (\xi- (2^{k}-1) e_1)\right) \\  
&*&   \epsilon_{N} \left( \displaystyle \displaystyle\sum_{k=1}^N 2^{\frac{2}{3}k - 1}  \eta_k \   e^{-s 2^{2k} } \widehat{w} (\xi- (2^{k}-1) e_1) \right) \\
&\geq&  \epsilon_{N}^{3}  \left( \displaystyle \displaystyle\sum_{k=1}^{N-1}  2^{\frac{2}{3}k - \frac{1}{3} +2(\frac{2k}{3} -1)} e^{-s(2^{2k+2} +2^{2k}+ 2^{2k} )}   \ \eta_{k}^{2}  \  \eta_{k+1} \right) \widehat{w} * \widehat{w} * \widehat{w} (\xi+e_1). 
\end{eqnarray*}
Observe that $\widehat{w} *\widehat{w} * \widehat{w}(\cdot+e_1)$ is supported by $B_1(0)$. We have 
\begin{eqnarray*}
\widehat{u}_{N} (t,\xi)  &\geq&   \displaystyle\int_{0}^{t} e^{-(t-s) \left | \xi \right |^{2}}  [\widehat{u} * \widehat{u} * \widehat{u} (\cdot,s)](\xi) \, ds \\
&\geq& \epsilon_{N}^{3} \displaystyle\int_{0}^{t} e^{-(t-s)}    \left( \displaystyle \displaystyle\sum_{k=1}^{N-1} 
 2^{2k-\frac{7}{3}}   e^{-s (3.2^{2k+1})}   \eta_{k}^{2}  \  \eta_{k+1} \right)  \ ( \widehat{w} * \widehat{w} * \widehat{w}) (\xi+e_1) \, ds \\
 &=& \left( \displaystyle\epsilon_{N}^{3} \displaystyle\sum_{k=1}^{N-1} 
 2^{2k-\frac{7}{3}}  \eta_{k}^{2}  \  \eta_{k+1} e^{-t}   \displaystyle\int_{0}^{t} e^{(1- 3.2^{2k+1})s}  \, ds  \right) \ ( \widehat{w} * \widehat{w} * \widehat{w}) (\xi +e_1) \\
 &=& \left( \displaystyle\epsilon_{N}^{3} \displaystyle\sum_{k=1}^{N-1} 
 \frac{2^{2k-\frac{7}{3}}}{ 3.2^{2k+1}-1} \   \eta_{k}^{2}  \  \eta_{k+1} \  e^{-t}   (1- e^{t(1- 3.2^{2k+1})} \ )  \right) \ ( \widehat{w} * \widehat{w} * \widehat{w}) (\xi+e_1).
\end{eqnarray*}
Choose $t=\frac{\delta}{2}$. Therefore, if we call $\tau_{N}>0$ as
 \begin{eqnarray*}
\tau_{N} = \epsilon_{N}^{3} \displaystyle\sum_{k=1}^{N-1} \frac{2^{2k-\frac{7}{3}}}{ 3.2^{2k+1}-1} \   \eta_{k}^{2}  \  \eta_{k+1} e^{-2\delta}   (1- e^{\frac{\delta}{2}(1- 3.2^{2k+1})}), 
 \end{eqnarray*}
 we get
\begin{eqnarray}
\label{ler}
 \left \| \widehat{u}_{N}\left(\displaystyle\frac{\delta}{2},\cdot\right) \right \|_{L^{1}}  \geq  \tau_{N} \left \| \widehat{w} \right \|_{L^{1}}^{3}
 \end{eqnarray}
On the other hand, we consider the Cauchy problem \eqref{class} with initial data $u_{N}\left(\displaystyle\frac{\delta}{2},x\right)$. 
Now we apply Lemma \ref{Theoreme1} with the new initial data $u_{N}\left(\displaystyle\frac{\delta}{2},x\right)$ instead of $u_0$. Moreover  as $T_N \rightarrow + \infty$ as $N \rightarrow + \infty$, if $N\in \mathbb{N}$ is chosen large enough, then all the assumption of this Lemma are satisfied  and therefore $T^{*}_{N} \leq \delta$.  
\end{proof}
\begin{remark}
With our choice of initial data $u_{0,N}$ we have
\begin{eqnarray*}
\left \| u_{0,N} \right \|_{\dot{B}^{-\frac{2}{3},3}_{9}} \leq   \epsilon_{N}   \left \| w(x) \right \|_{9} \left( \displaystyle \displaystyle\sum_{j=1}^N \eta_j^{3} \right)^{\frac{1}{3}} \longrightarrow{} +\infty.
\end{eqnarray*}
and 
\[
\displaystyle\sum_{k=1}^{N} \eta_k^{3}  \sim \log N  \qquad \mbox{when}  \qquad    N \rightarrow{} + \infty.  
\] 
Therefore, our initial condition in Theorem \ref{inflation} has the defect that is not in arbitrarily small in $\dot{B}^{-\frac{2}{3},3}
_{9}$. We leave open the following question: a smallness condition on $u_0$ in $\dot{B}^{-\frac{2}{3},3}_{9}$ does it imply the existence of a global solution or not ?. However Theorem \ref{inflation} shows the optimality of the assumption $p<9$ in the global existence Theorem \ref{T1}.
\end{remark} 
On a related subject, Pierre-Gilles Lemari\'e Rieusset dans \cite{Lema} studied the parabolic semi-linear equations on $(0, \infty) \times \mathbb{R}^{n}$ of type
\begin{equation*}
\partial_t u - (-\Delta)^{\frac{\alpha}{2}} u= (-\Delta)^{\frac{\beta}{2}} u^{2},
\end{equation*}
where $0 <\alpha< n +2\beta$ and $0 <\beta < \alpha$. Actually Lemari\'e-Rieusset worked on a more general quadratic nonlinearity that $(-\Delta)^{\frac{\beta}{2}} u^{2}$. He showed similar results to ours when $\beta<\frac{\alpha}{2}$.
\section*{Acknowledgement}
This work is supported by  the \textit{Secretar\'ia Nacional de Educaci\'on Superior, Ciencia, Tecnolog\'ia e Innovacio\'n del Ecuador (SENESCYT).}

\end{document}